\documentclass[a4paper, svgnames, 10pt]{amsart}

\usepackage[a-1b]{pdfx}

\usepackage{
    amssymb,
    mathtools,
    xcolor,
    booktabs,
    graphicx,
    upgreek,
    comment,
    geometry,
}

\usepackage[ruled, noline]{algorithm2e}
\SetProcNameSty{textsc}\SetFuncSty{textsc}
\SetKwInOut{Input}{input}\SetKwInOut{Output}{output}
\SetKw{Return}{return}
\SetKw{KwTo}{to}
\DontPrintSemicolon

\usepackage[english]{babel}

\usepackage[final]{fixme}

\usepackage{imakeidx}
\makeindex[title=Index of Symbols and their Types] 

\usepackage{hyperref}
\hypersetup{colorlinks, allcolors=MediumBlue}
\newcommand{\msc}[1]{\href{https://zbmath.org/classification/?q=#1}{#1}}
\newcommand{\doi}[1]{doi:\href{https://doi.org/#1}{#1}}

\usepackage[noabbrev, capitalise]{cleveref}
\crefdefaultlabelformat{#2\textup{#1}#3}

\crefname{main} {Theorem}       {Theorems}
\crefname{thm}  {Theorem}       {Theorems}
\crefname{lem}  {Lemma}         {Lemmas}
\crefname{prop} {Proposition}   {Propositions}
\crefname{fig}  {Figure}        {Figures}
\crefname{tbl}  {Table}         {Tables}
\crefname{rmk}  {Remark}        {Remarks}
\crefname{que}  {Question}      {Questions}

\theoremstyle{plain}
\newtheorem{main} {Theorem}

\newtheorem{thm} {Theorem} [section]
\newtheorem{prop}   [thm] {Proposition}
\newtheorem{cor}    [thm] {Corollary}
\newtheorem{lem}    [thm] {Lemma}
\theoremstyle{definition}
\newtheorem{rmk}    [thm] {Remark}
\newtheorem{que}    [thm] {Question}

\numberwithin{equation}{section}

\newcommand{\FF}{\mathbb{F}}

\DeclarePairedDelimiterX\set[1]\lbrace\rbrace{\,\def\given{\,\delimsize\vert\,}#1\,}
\DeclarePairedDelimiterX\gen[1]\langle\rangle{\,\def\given{\,\delimsize\vert\,}#1\,}
\DeclareMathOperator{\Hom}{Hom}

\newcommand{\SmallGroup}{\texttt{SmallGroup}}
\newcommand{\Cent}{C}               
\newcommand{\Jen}{D}                
\newcommand{\rad}[1]{\sqrt{#1}}     
\newcommand{\Dih}[1]{D_{#1}}        
\newcommand{\Quat}[1]{Q_{#1}}       
\newcommand{\Z}[1]{Z(#1)}           
\newcommand{\Frat}[1]{\Phi(#1)}     
\newcommand{\rank}[1]{d(#1)}        
\newcommand{\rH}{\mathrm{H}}        
\newcommand{\HH}{\mathrm{HH}}       
\newcommand{\cA}{\mathcal{A}}
\newcommand{\cB}{\mathcal{B}}

\begin{document}

\title[Geometric approach to the modular isomorphism problem]{Geometric approach to\\ the modular isomorphism problem:\\ groups of order $64$}

\author[L.~Margolis]{Leo Margolis\,
}
\address[Leo Margolis]
{Universidad Aut\'onoma de Madrid, Departamento de Matem\'aticas,  C/ Francisco Tom\'as y Valiente 7, Facultad de Ciencias, m\'odulo 17, 28049 Madrid, Spain.}
\email{leo.margolis@uam.es}

\author[T.~Sakurai]{Taro Sakurai\,
}
\address[Taro Sakurai]
{Department of Mathematics and Informatics, Graduate School of Science, Chiba University, 1-33, Yayoi-cho, Inage-ku, Chiba-shi, Chiba, 263-8522, Japan.}
\email{tsakurai@math.s.chiba-u.ac.jp}

\thanks{
    The first author acknowledges financial support from the Spanish Ministry of Science and Innovation, through the Ram\'on y Cajal grant program.
    We also thank Tommy Hofmann and Max Horn for their help in our first use of \texttt{Oscar} and Diego Garc\'ia-Lucas for pointing out an error in an earlier version of this paper.
}

\subjclass[2020]{
    Primary \msc{20C05};
    Secondary \msc{16S34}, \msc{20D15}, \msc{13P25}, \msc{16Z05}%
}

\keywords{Isomorphism problem, group algebras, minors, radical membership problem}

\date{\today}

\begin{abstract}
    We introduce a procedure based on computational algebraic geometry to determine whether two algebras are isomorphic.
    We then apply it to show that if $R$ is a commutative unital ring in which $2$ is not invertible, $G$ is a group of order dividing $64$ and $H$ some group, then an isomorphism of unital algebras $RG \cong RH$ implies an isomorphism of groups $G \cong H$.
\end{abstract}

\maketitle

\section{Introduction}

\index{$R$ : Commutative Ring}
\index{$G$ : Finite Group}
\index{$H$ : Finite Group}
The Modular Isomorphism Problem asks whether an isomorphism of two group algebras of finite $p$-groups over a field of characteristic $p > 0$ implies an isomorphism of the groups.
\index{$p$ : Prime Number}
Although some negative solutions have been found recently \cite{GarciaLucasMargolisDelRio22, MargolisSakurai25, BaginskiZabielski25}, these form a rather isolated class;
the problem is far from being well understood in general and many positive solutions are still known only over prime fields \cite{MargolisSakuraiSaoPaulo}.
This paper presents a new algorithmic idea to attack the problem based on computational algebraic geometry and results from its implementation.

To explain the idea, let $\Lambda$ and $\Gamma$ be free $R$-algebras of equal finite rank over a nonzero commutative unital ring $R$.
\index{$\Lambda$ : Free Algebra}
\index{$\Gamma$ : Free Algebra}
If we want to decide, whether $\Lambda$ and $\Gamma$ are isomorphic, an idea could be to try to see, if a subset of $R$-linearly independent elements in $\Gamma$ satisfies the relations between the elements of a basis $\cA$ of $\Lambda$.
\index{$A$ @ $\cA$ : Basis}
Consider an isomorphism of algebras from $\Lambda$ to $\Gamma$ and represent the images of elements in $\cA$ as generic elements with respect to a basis $\cB$ of $\Gamma$.
Then we obtain a matrix $[\upalpha_{a b}]$ in a polynomial algebra over $R$ with $n^2$ variables, where $n$ is the $R$-rank of $\Lambda$ and $\Gamma$.
\index{$B$ @ $\cB$ : Basis}
\index{$n$ : Positive Integer}
The algebraic relations of the elements can then be formulated as polynomials in these variables.
Under an evaluation map $R[\upalpha_{a b}] \to R$ defined by $\upalpha_{a b} \mapsto \alpha_{a b}$, the concrete matrix $[\alpha_{a b}]$ is invertible and so is the determinant $|\alpha_{a b}|$.
Hence $\Lambda$ and $\Gamma$ are not isomorphic if and only if the determinant $|\upalpha_{a b}|$, as an element of the polynomial algebra modulo the kernel, is not invertible.
This will be particularly the case, when a power of the determinant $|\upalpha_{a b}|$ lies in the ideal $I$ generated by the algebraic relations, i.e. when it is an element in the radical of $I$.
\index{$I$ : Ideal}
The adjective \emph{geometric} in the title basically refers to this application of the radical of ideals in polynomial algebras.

Although this approach seems rather impractical at first glance, we show that in the context of the Modular Isomorphism Problem it can be sufficiently simplified to obtain:

\begin{main}\label{main:64}
    Let $R$ be a commutative unital ring in which $2$ is not invertible and let $G$ and $H$ be groups.
    If the order of $G$ divides $64$, then an isomorphism of group algebras $RG \cong RH$ as unital algebras implies $G \cong H$ as groups.
\end{main}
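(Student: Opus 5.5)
We first reduce the general statement to the classical Modular Isomorphism Problem over $\FF_2$. Since $2$ is not invertible in $R$, there is a maximal ideal $\mathfrak m$ of $R$ with $2 \in \mathfrak m$, and $k = R/\mathfrak m$ is a field of characteristic $2$. Tensoring the isomorphism $RG \cong RH$ with $k$ gives $kG \cong kH$. Comparing ranks gives $|H| = |G|$, so $H$ is also a $2$-group of order dividing $64$.

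We then want to descend from $k$ to $\FF_2$. Here we use the standard fact that an isomorphism $kG \cong kH$ for $2$-groups can be replaced by a normalized one carrying $\Delta(G)$ onto $\Delta(H)$. We also invoke that the coefficients of an isomorphism live in a finitely generated subring, which can be reduced to a finite field $\FF_{2^m}$. Hence it suffices to show that $\FF_{2^m}G \cong \FF_{2^m}H$ forces $G \cong H$ for every finite field $\FF_{2^m}$, or alternatively to work over the algebraic closure $\overline{\FF}_2$. The geometric formulation handles every field of characteristic $2$ uniformly, because radical membership over $\FF_2$ persists under field extension. This is exactly why the introduction's radical criterion fits.

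For orders $<64$, the Modular Isomorphism Problem has a positive solution over arbitrary fields of characteristic $2$ by known invariants, or it can be redone by the same method. We then focus on order $64$. There are $267$ groups, and classical invariants separate all but a small list of pairs. These invariants include the abelianization, the isomorphism types of $\Z{G}$ and $G/\Frat{G}$, Jennings series ranks, the small group algebra, and Roggenkamp--Scott and Sandling-type invariants, all valid over any field of characteristic $2$. For the remaining pairs $(G,H)$ we apply the geometric procedure:
\begin{enumerate}
\item Take the Jennings basis $\cA$ of $\FF_2 G$ and $\cB$ of $\FF_2 H$.
\item Exploit that an isomorphism is determined by the images of a minimal generating set of $G$ modulo $\Delta^2$. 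These images are generic elements of $\Delta(H)$ with $d(G)\cdot 63$ variables, which we then truncate further modulo a suitable power of the augmentation ideal.
\item Impose the defining relations of $G$ (power and commutator relations) as polynomial equations, producing an ideal $I$.
\item Replace the full determinant by a smaller minor controlling invertibility, namely the determinant of the induced map on $\Delta/\Delta^2$, of size $d(G)$.
\item Verify by Gr\"obner bases (e.g.\ in \texttt{Oscar}, via the Rabinowitsch trick) that this minor lies in $\rad{I}$ over $\FF_2$.
\end{enumerate}
Radical membership over $\FF_2$ then implies that no isomorphism exists over any field of characteristic $2$, hence over any such $R$.

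The main obstacle is computational feasibility of step (5). The number of variables and the degree of the relations must be cut down drastically. We would try the following reductions first:
\begin{enumerate}
\item Pass to quotients $\FF_2 G/\Delta(G)^t$ or $\FF_2 G/I(\Z{G}')$ with small $t$, where the pairs are still distinguished.
\item Use automorphisms of $H$ and unit conjugation to normalize some variables to $0$ or $1$.
\item Choose the minor cleverly so that the membership certificate is found in low degree.
\end{enumerate}
If a pair resists, we would fall back to a targeted invariant argument for that pair.
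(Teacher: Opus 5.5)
Your reduction to a residue field $F=R/\mathfrak m$ of characteristic $2$ is the same as the paper's. The logic of your test is also sound: if the $\rank{G}\times\rank{H}$ determinant lies in the radical over $\FF_2$, isomorphisms are ruled out over every field of characteristic $2$.

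The genuine gap is in your reduction to finitely many pairs. You claim the small group algebra and Sandling-type invariants are valid over any field of characteristic $2$, and they are not. Sandling's invariant $G/\gamma_2(G)^2\gamma_3(G)$, and its refinement $G/\gamma_2(G)^2\gamma_4(G)$ for $2$-generated groups, are only known over the prime field. Over $\FF_4$ the small group algebra in fact fails to determine $G/\gamma_2(G)^2\gamma_3(G)$. For $\Dih{8}$ and $\Quat{8}$ the derived subgroup is central of order $2$, so $\Delta(FG)\Delta(FG')FG=\Delta^3(FG)$. By the remark in \cref{sec:Order8}, these truncations are isomorphic over $\FF_4$, although $G/\gamma_2(G)^2\gamma_3(G)=G$ differs for the two groups. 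The same phenomenon appears in \cref{tbl:ComputationsGroups64}, e.g.\ $\ell(\FF_2)=2<3=\ell(\FF_4)$ for $(142,155)$.

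Indeed, $12$ of the $18$ pairs the paper must treat (those marked ``Known'') are exactly pairs separated over $\FF_2$ by these invariants. Your reduction would therefore discard precisely the cases the theorem is about. Only invariants established over arbitrary fields may be used, namely those in \cref{sec:Appendix}: $\dim\HH^1$, class numbers of $2^i$-th powers, maximal elementary abelian subgroups, the $\Omega(G:G')$ invariant, $\dim\rH^2$, $\dim\HH^2$, elementary abelian direct factors, and the known positive classes. These leave $18$ pairs.

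For these pairs your proposal remains a plan. Step (5) is never carried out, and the fallback ``targeted invariant argument'' does not exist, since by construction no invariant known over all fields separates them. Your square version, with generic images in $\Delta(\FF_2H)/\Delta^{t}(\FF_2H)$, is logically correct. By the Nullstellensatz it would even succeed at $t=\ell+1$ for the $\ell$ in \cref{tbl:ComputationsGroups64}. However, it is exactly the version whose Gr\"obner computations you fear are infeasible.

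The idea you are missing, which makes the computations feasible, is to replace $H$ by a quotient $Q$ of $H$ that is not a quotient of $G$. One then rules out surjections $\Delta(FG)/\Delta^{\ell+1}(FG)\to\Delta(FQ)/\Delta^{\ell+1}(FQ)$. Since $\rank{Q}$ may be smaller than $\rank{G}$, one needs all maximal minors of the $\rank{G}\times\rank{Q}$ matrix to lie in $\rad{I}$ (\cref{prop:Minors}). In exchange, the number of variables drops from $\rank{G}\cdot\dim\Delta(FH)/\Delta^{\ell+1}(FH)$ to $\rank{G}\cdot\dim\Gamma$. For the six pairs with $\ell=7$ this is a drop from $3\cdot 47=141$ to $3\cdot 14=42$, and even so the hardest case takes about three hours. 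Without this reduction and the actual outcomes recorded in the table, the order-$64$ case is not proved.
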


This is not the first algorithm practically implemented to study the Modular Isomorphism Problem.
In fact, it is the third after \cite{Wursthorn93} and \cite{Eick08}.
The advantage in our approach is that it can detect the non-isomorphism of modular group algebras over all fields of characteristic $p$ at once by finite computations, while the earlier algorithms were only confirming this over a fixed finite field, usually the prime field.
Admittedly, our algorithm cannot detect whether two algebras are isomorphic over a given field, but it can determine whether an isomorphism exists over some finite field extension.
We remark that groups of order $64$ were also studied in previous approaches, but these took place only over the prime field, cf.~\cref{sec:ComputationsOrder64} for details.

After presenting the theory behind our idea in more detail in \cref{sec:Approach}, we illustrate it in a small example in \cref{sec:Order8}.
The computations for groups of order $64$ are presented in \cref{sec:ComputationsOrder64}.
\cref{sec:Appendix} then contains the data we need to know about the groups of order $64$ to reduce our previous calculations to only the $18$ cases studied in \cref{sec:ComputationsOrder64}.

\section{Geometric approach}\label{sec:Approach}
We will explain the basics of our geometric ideas to study the Modular Isomorphism Problem (MIP) in a more general setting and present a basic framework of the computation.
In fact, the idea could be presented in an even more general setting, but we choose a situation close enough to our intended application to the MIP.

Let $\Lambda$ be a finite-dimensional nilpotent algebra over a field $F$, where $n \geq 0$ is minimal such that $\Lambda^{n+1} = 0$.
\index{$F$ : Field}
Let $\cA$ be a basis of $\Lambda$ as an $F$-vector space.
Recall that the structure constants $(\sigma_{xy}^z)$ of $\Lambda$ with respect to $\cA$ are defined by $xy = \sum_{z \in \cA} \sigma_{xy}^z z$ for all $x, y \in \cA$.
We call $\cA$ a filtered basis if there exist subsets $\cA_1,\dotsc,\cA_n$ of $\cA$ such that the natural images of $\cA_1$ form a basis of $\Lambda/\Lambda^2$ as an $F$-vector space, the images of $\cA_2$ form a basis of $\Lambda^2/\Lambda^3$ etc. Note that every finite-dimensional nilpotent algebra has a filtered basis.
If $\cA$ is a filtered basis of $\Lambda$, we write $\cA_k$ for a subset of $\cA$ providing a basis of $\Lambda^k/\Lambda^{k+1}$.

To test whether two finite-dimensional nilpotent algebras are isomorphic, we simply use determinants---or more generally, maximal minors---of the representation matrices of algebra homomorphisms.
For an ideal $I$, let $\rad{I}$ denote its radical:
$\rad{I} = \set{ f \given \text{$f^m \in I$ for some $m \geq 1$} }$.
The precise setting that we consider is summarized as follows.

\begin{prop}\label{prop:Minors}
    Let $\Lambda$ and $\Gamma$ be finite-dimensional nilpotent algebras over a field $F$.
    Let $\cA$ be a filtered basis of $\Lambda$ and $\cB$ a filtered basis of $\Gamma$.
    Let $(\sigma_{xy}^z)$ and $(\tau_{ab}^c)$ be the structure constants of $\Lambda$ and $\Gamma$ with respect to $\cA$ and $\cB$, respectively.
    Define an ideal $I$ of a polynomial algebra $F[\, \upalpha_{ab} \mid a \in \cA, b \in \cB \,]$ by
    \begin{equation*}
        I = \gen[\Bigg]{ \sum_{z \in \cA} \sigma_{xy}^z \upalpha_{zc} - \sum_{a, b \in \cB} \tau_{ab}^c \upalpha_{xa} \upalpha_{yb} \given x, y \in \cA, c \in \cB }.
    \end{equation*}

    \begin{enumerate}
        \item If $|\cA_1| < |\cB_1|$ or $f \in \rad{I}$ for all maximal minors $f$ of the matrix $[\upalpha_{ab}]_{a \in \cA_1, b \in \cB_1}$, then $E \otimes_F \Gamma$ is not a homomorphic image of $E \otimes_F \Lambda$ for all field extensions $E/F$. \label{item:first}

        \item If $|\cA_1| \geq |\cB_1|$ and $f \notin \rad{I}$ for some maximal minor $f$ of the matrix $[\upalpha_{ab}]_{a \in \cA_1, b \in \cB_1}$, then $E \otimes_F \Gamma$ is a homomorphic image of $E \otimes_F \Lambda$ for some finite field extension $E/F$. \label{item:second}
    \end{enumerate}
\end{prop}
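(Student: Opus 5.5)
The plan is to treat points of the affine variety $V(I)$ over a field extension $E$ as precisely the algebra homomorphisms $E\otimes_F\Lambda \to E\otimes_F\Gamma$. First I will make this dictionary explicit. Given a tuple $(\alpha_{ab}) \in E^{\cA\times\cB}$, define an $E$-linear map $\varphi\colon E\otimes_F\Lambda\to E\otimes_F\Gamma$ by $\varphi(x)=\sum_{b\in\cB}\alpha_{xb}\,b$ for $x \in \cA$. Expanding $\varphi(xy)$ and $\varphi(x)\varphi(y)$ in the basis $\cB$ via the structure constants, the coefficient of $c$ in $\varphi(xy)-\varphi(x)\varphi(y)$ is exactly the generator of $I$ indexed by $(x,y,c)$, evaluated at $(\alpha_{ab})$. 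Hence $\varphi$ is multiplicative if and only if $(\alpha_{ab})$ is a common zero of $I$. Since the algebras are nonunital (nilpotent), these are all the algebra homomorphisms.

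The second step characterises surjectivity through the $\cA_1\times\cB_1$ block. Note first that $E\otimes_F\cA$ and $E\otimes_F\cB$ remain filtered bases after base change, since $(E\otimes\Lambda)^k=E\otimes\Lambda^k$. For a homomorphism $\varphi$ we have $\varphi(\Lambda^k)\subseteq\Gamma^k$. I claim that $\varphi$ is surjective if and only if the induced map $\bar\varphi\colon\Lambda/\Lambda^2\to\Gamma/\Gamma^2$ is surjective.
\begin{itemize}
\item Only if: this is clear, since surjectivity passes to the quotient modulo squares.
\item If: by the standard nilpotent Nakayama argument, $\operatorname{im}\varphi+\Gamma^2=\Gamma$ gives $\Gamma^2=(\operatorname{im}\varphi)^2+\Gamma^3 \subseteq \operatorname{im}\varphi+\Gamma^3$. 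Iterating up to $\Gamma^{m+1}=0$ yields $\operatorname{im}\varphi=\Gamma$.
\end{itemize}
The subtle point is that the matrix of $\bar\varphi$ in the bases $\cA_1,\cB_1$ is not literally $[\alpha_{ab}]_{a\in\cA_1,b\in\cB_1}$, because $\varphi(a)$ for $a\in\cA_1$ may have components along elements of $\cB\setminus\cB_1$ that are not in $\Gamma^2$. I expect this to be the main obstacle. I would resolve it by noting that a filtered basis can be taken so that $\cB\setminus\cB_1$ spans $\Gamma^2$, which holds if the sets $\cB_k$ partition $\cB$ and $\cB_k\subseteq\Gamma^k$. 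I will read the definition this way. Then the matrix of $\bar\varphi$ is exactly the $\cA_1\times\cB_1$ block. Moreover, for $x\in\cA\setminus\cA_1$ we have $x\in\Lambda^2$, so $\varphi(x)\in\Gamma^2$, and the remaining rows play no role. So $\varphi$ is surjective if and only if this block has rank $|\cB_1|$, i.e.\ if and only if $|\cA_1|\ge|\cB_1|$ and some maximal minor is nonzero at the point.

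Now (1) follows. If $|\cA_1|<|\cB_1|$, no point can give a surjection. Otherwise, suppose each maximal minor $f$ lies in $\rad{I}$, so $f^m\in I$ for some $m$. Then $f$ vanishes at every $E$-point of $V(I)$, so no homomorphism $E\otimes_F\Lambda\to E\otimes_F\Gamma$ is surjective.

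For (2), take $f\notin\rad{I}$. By the Nullstellensatz in the Rabinowitsch form, $1\notin I+\langle 1-tf\rangle$ in $F[\upalpha,t]$, so this ideal is proper. It is therefore contained in a maximal ideal $\mathfrak m$, and $E=F[\upalpha,t]/\mathfrak m$ is a finite extension of $F$ by Zariski's lemma. The images of the $\upalpha_{ab}$ give a point of $V(I)$ over $E$ at which $f$ is invertible. By the two steps above, this point yields a surjective homomorphism $E\otimes_F\Lambda\to E\otimes_F\Gamma$.
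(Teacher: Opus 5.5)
Your proof is correct, and for part (2) it takes a different route from the paper. The dictionary between $E$-points of $V(I)$ and algebra homomorphisms $E\otimes_F\Lambda\to E\otimes_F\Gamma$, and your part (1), match the paper's argument. The only difference there is that the paper dismisses $|\cA_1|<|\cB_1|$ via invariance of the minimal number of generators, which by \cref{lem:RadicalSquare} amounts to your rank count.

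The paper proves part (2) by contraposition over an algebraic closure $\overline F$. A surjection over $\overline F$ would already be defined over the finite extension generated by its matrix entries. So if none exists over finite extensions, every maximal minor vanishes on the $\overline F$-points of $V(\overline F I)$. The strong Nullstellensatz then gives $f^m\in\overline F I$, and \cref{lem:IdelaIntersection} (faithful flatness) descends this to $f^m\in I$.

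You work directly over $F$ instead. If $1\in I+\langle 1-tf\rangle$ held, then $f\in\rad{I}$ would follow by substituting $t=1/f$ and clearing denominators. This is the elementary half of the Rabinowitsch trick, so no Nullstellensatz is needed at that step. Hence the ideal is proper, and Zariski's lemma makes the residue field of a maximal ideal containing it a finite extension $E$. That field carries a point of $V(I)$ with $f\neq 0$, which gives the surjection.

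Your route does without $\overline F$ and \cref{lem:IdelaIntersection}, uses only the weak Nullstellensatz, and exhibits $E$ concretely as a residue field of $F[\upalpha_{ab},t]/(I+\langle 1-tf\rangle)$. The paper's route keeps the classical ``vanishing on the variety versus radical membership'' formulation behind its geometric framing.

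The subtlety you flag is real, and the paper uses it silently: the block $[\upalpha_{ab}]_{a\in\cA_1,b\in\cB_1}$ is the matrix of the induced map $\Lambda/\Lambda^2\to\Gamma/\Gamma^2$ only if $\cB\setminus\cB_1\subseteq\Gamma^2$. It is not an extra reading of the definition, though. For the images of $\cB_k$ in $\Gamma^k/\Gamma^{k+1}$ to make sense one needs $\cB_k\subseteq\Gamma^k$. A dimension count then shows that the $\cB_k$ are pairwise disjoint and exhaust $\cB$, which gives the inclusion automatically.
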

\index{$E$ : Field}

We need two lemmas before proving this proposition.
The facts of the lemma are certainly well known and several similar results appear in the literature, cf.~for example \cite[(5.2) Proposition]{Kuelshammer91} or \cite[Chapter III, Theorem 1.9(a)]{AuslanderReitenSmalo95}.
We did not find, however, the exact formulation we need and which has the same assumptions as we have here, so that we include a proof for completeness.

\begin{lem}\label{lem:RadicalSquare}
    Let $\Lambda$ and $\Gamma$ be finite-dimensional nilpotent algebras over a field $F$.
    Then an algebra homomorphism $\Lambda \to \Gamma$ is surjective if and only if the induced linear map $\Lambda/\Lambda^2 \to \Gamma/\Gamma^2$ is.
    Moreover, the size of a minimal generating set of $\Lambda$ as an $F$-algebra is the $F$-dimension of $\Lambda/\Lambda^2$.
    So, if $\cA$ is a filtered basis of $\Lambda$, then the size of a minimal generating set is $|\cA_1|$.
\end{lem}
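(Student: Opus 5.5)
The argument rests on the fact that a nilpotent algebra is generated by any lift of a basis of its Frattini quotient $\Lambda/\Lambda^2$; this is a Nakayama-type statement, proved by induction along the powers of the radical.

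\textbf{Step 1 (key claim).} Let $S \subseteq \Lambda$ be a subset whose image spans $\Lambda/\Lambda^2$. Let $A$ be the (non-unital) subalgebra generated by $S$. We show $A = \Lambda$ by proving $\Lambda^k \subseteq A + \Lambda^{k+1}$ for every $k \ge 1$.

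For $k=1$ this is the hypothesis on $S$. Assume it holds for some $k$. The space $\Lambda^{k+1}$ is spanned by products $uv$ with $u \in \Lambda$ and $v \in \Lambda^k$. Write $u = s + u'$ and $v = t + v'$ with $s, t \in A$, $u' \in \Lambda^2$ and $v' \in \Lambda^{k+1}$. Expanding,
\[
uv = st + sv' + u't + u'v'.
\]
Here $st \in A$. The term $sv'$ lies in $\Lambda^{k+2}$. Since $t \in A \subseteq \Lambda$, the term $u't$ lies in $\Lambda^{3}$; this alone is not enough, because we need it in $\Lambda^{k+2}$.

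To handle this cleanly, work with words instead. By the hypothesis for $k=1$ and induction, $\Lambda^k$ is spanned modulo $\Lambda^{k+1}$ by products of $k$ elements of $S$. This holds because $\Lambda^k$ is spanned by products $x_1 \cdots x_k$ with $x_i \in \Lambda$. Writing each $x_i = s_i + y_i$ with $s_i \in \operatorname{span} S$ and $y_i \in \Lambda^2$, every term of the expansion other than $s_1 \cdots s_k$ lies in $\Lambda^{k+1}$.

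Hence $\Lambda = A + \Lambda^{n+1} = A$, using $\Lambda^{n+1} = 0$. This is the only step with real content, and the word-expansion argument is the form to use.

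\textbf{Step 2 (surjectivity).} Let $\varphi\colon \Lambda \to \Gamma$ be an algebra homomorphism. Since $\varphi(\Lambda^2) \subseteq \Gamma^2$, the map $\varphi$ induces a linear map $\Lambda/\Lambda^2 \to \Gamma/\Gamma^2$.

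If $\varphi$ is surjective, then the induced map is clearly surjective.

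Conversely, suppose the induced map is surjective. Then $\varphi(\Lambda)$ is a subset of $\Gamma$ whose image spans $\Gamma/\Gamma^2$. Since $\Gamma$ is nilpotent, Step 1 shows that $\varphi(\Lambda)$ generates $\Gamma$. But $\varphi(\Lambda)$ is already a subalgebra, so $\varphi(\Lambda) = \Gamma$.

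\textbf{Step 3 (minimal generating sets).} By Step 1, lifting a basis of $\Lambda/\Lambda^2$ gives a generating set of size $\dim_F \Lambda/\Lambda^2$.

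Conversely, let $S$ be any generating set. Then $\Lambda$ is spanned by all words in $S$, and every word of length at least $2$ lies in $\Lambda^2$. So the images of $S$ span $\Lambda/\Lambda^2$, which forces $|S| \ge \dim_F \Lambda/\Lambda^2$. Thus the minimal size of a generating set equals $\dim_F \Lambda/\Lambda^2$.

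Finally, if $\cA$ is a filtered basis, then by definition the images of $\cA_1$ form a basis of $\Lambda/\Lambda^2$. Therefore this minimal size equals $|\cA_1|$.
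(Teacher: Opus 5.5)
Your proof is correct and takes essentially the paper's route: the paper proves the same Nakayama-type claim (for a subalgebra $\Pi$ of $\Gamma$, $\Gamma = \Pi + \Gamma^2$ gives $\Gamma^n = \Pi^n + \Gamma^{n+1}$, hence $\Gamma = \Pi + \Gamma^n$ for all $n$, and nilpotency gives $\Pi = \Gamma$), then derives the surjectivity criterion and the generator count from it exactly as you do. As an aside, your abandoned first attempt in Step 1 also works, because $t = v - v' \in \Lambda^k$ puts $u't$ in $\Lambda^{k+2}$; and your word-expansion argument uses only the case $k=1$, not induction.
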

\begin{proof}
    We first prove the following claim:
    if $\Pi$ is a subalgebra of $\Gamma$ such that $\Gamma = \Pi + \Gamma^2$, then $\Pi = \Gamma$.
    Indeed, $\Gamma = \Pi + \Gamma^2$ implies $\Gamma^n = \Pi^n + \Gamma^{n+1}$ for every positive integer $n$.
    Hence, \[\Gamma = \Pi + \Gamma^2 = \Pi + \Pi^2 + \Gamma^3 = \Pi + \Gamma^3 = \dotsb = \Pi + \Gamma^n \] for every $n$.
    As $\Gamma$ is nilpotent, the claim follows.

    Now let $\psi \colon \Lambda \to \Gamma$ be an algebra homomorphism and denote by $\psi^* \colon \Lambda/\Lambda^2 \to \Gamma/\Gamma^2$ the induced linear map.
    Clearly, if $\psi$ is surjective then so is $\psi^*$.

    On the other hand, if $\psi^*$ is surjective then, for each $b \in \Gamma$, there exists $a \in \Lambda$ such that $b + \Gamma^2 = \psi^*(a + \Lambda^2) = \psi(a) + \Gamma^2$.
    Hence $\Gamma = \psi(\Lambda) + \Gamma^2$ and $\psi$ is surjective by the claim for $\Pi = \psi(\Lambda)$.

    Let $\cA_1$ be a subset of $\Lambda$ whose natural images form a basis of $\Lambda/\Lambda^2$ as an $F$-vector space and similarly let $\cB_1$ be a subset of $\Gamma$ whose natural images in $\Gamma/\Gamma$ form a basis.
    For the second statement of the lemma note first that the minimal number of generators of $\Lambda$ is greater than or equal to the $F$-dimension of $\Lambda/\Lambda^2$, as the images of a generating set also generate $\Lambda/\Lambda^2$ and the product of any two elements in $\Lambda/\Lambda^2$ equals zero.
    On the other hand the $F$-dimension of $\Lambda/\Lambda^2$ equals $|\cA_1|$.
    As $\cA_1$ generates the whole algebra $\Lambda$ by the claim at the beginning of the proof, the minimal number of generators of $\Lambda$ is hence bounded from above by the $F$-dimension of $\Lambda/\Lambda^2$.
\end{proof}
\index{$\Pi$ : Algebra}
\index{$\psi^*$ : Linear Map}

Recall that for a commutative unital ring $R$, a commutative unital $R$-algebra $S$ is called faithfully flat over $R$, if a sequence $\mathcal{X}$ of $R$-modules is exact if and only if $S \otimes_R \mathcal{X}$ is exact, cf.~\cite[p.~45]{Matsumura86}.
\index{$S$ : Commutative Algebra}
\index{$X$ @ $\mathcal{X}$ : Sequence}
So, if $K/F$ is a field extension, then $K$ is faithfully flat over $F$.
By \cite[p.~46, \textit{Change of coefficient ring}]{Matsumura86}, if $S$ is a commutative unital $F$-algebra, then $KS$ is faithfully flat over $S$.
The following lemma is then a consequence of \cite[Theorem 7.5(ii)]{Matsumura86}.
\index{$K$ : Field}

\begin{lem}\label{lem:IdelaIntersection}
    Let $K/F$ be a field extension, $S$ a commutative unital $F$-algebra and $I$ an ideal of $S$.
    Then $KI \cap S = I$.
\end{lem}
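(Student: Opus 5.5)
The inclusion $I \subseteq KI \cap S$ is immediate, since $S \subseteq KS$ and $I \subseteq KI$. Here $KS$ means $K \otimes_F S$, into which $S$ embeds via $s \mapsto 1 \otimes s$; this map is injective because $K$ is flat over $F$ and $F \to K$ is injective. Likewise $KI$ denotes the ideal of $KS$ generated by $I$, which is the image of $K \otimes_F I$, or equivalently of $KS \otimes_S I$.

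All the work lies in the reverse inclusion $KI \cap S \subseteq I$. I plan to invoke \cite[Theorem 7.5(ii)]{Matsumura86}: if $S \to S'$ is faithfully flat, then $IS' \cap S = I$ for every ideal $I$ of $S$. So it suffices to check that $S' = KS = K \otimes_F S$ is faithfully flat over $S$, which is exactly the change-of-coefficient-ring statement recalled before the lemma. Faithful flatness of $K$ over $F$ holds because $K$ is a nonzero free $F$-module, since every vector space has a basis. Tensoring with $S$ over $F$ then gives faithful flatness of $K\otimes_F S$ over $S$.

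If one prefers to avoid the citation, the argument is short. Choose an $F$-basis $\{e_j\}$ of $K$ with $e_0 = 1$. Then $K \otimes_F S = \bigoplus_j e_j \otimes S$ and $K \otimes_F I = \bigoplus_j e_j \otimes I$, using flatness of $K$ over $F$ to identify the latter as a subspace of the former. An element of $S$ corresponds to $1 \otimes s$. If it lies in $K\otimes_F I$, comparing the $e_0$-components gives $s \in I$.

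The only point needing care is making sure $KI$ really equals the image of $K \otimes_F I$ inside $K \otimes_F S$. This holds because $K \otimes_F I$ is already an ideal of $K\otimes_F S$, being closed under multiplication by pure tensors $k \otimes s$. Once that is settled, the basis-component comparison finishes the proof, so I expect no real obstacle.
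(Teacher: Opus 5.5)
Your proposal is correct and matches the paper: the paper also deduces the lemma from Matsumura's Theorem 7.5(ii), using that $KS$ is faithfully flat over $S$ by change of coefficient ring. The paper likewise notes an elementary linear-algebra alternative, and your basis argument with $e_0 = 1$, comparing $e_0$-components, carries that alternative out.
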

Alternatively, it is not hard to prove the lemma by elementary linear algebra, using that a subset of an $F$-vector space $V$ is linearly independent if and only if it is linearly independent in $KV$.
We are now ready to prove the main proposition of this section.
\index{$V$ : Vector Space}

\begin{proof}[Proof of \cref{prop:Minors}]
    For a field extension $E/F$, write $\Lambda_E = E \otimes_F \Lambda$ and $\Gamma_E = E \otimes_F \Gamma$ for brevity.

    \eqref{item:first}
    If $|\cA_1| < |\cB_1|$, then by \cref{lem:RadicalSquare} the minimal number of generators of $\Lambda$ is smaller than the minimal number of generators of $\Gamma$.
    As these numbers remain unchanged under field extensions, it is clear that no surjective homomorphism from $\Lambda_E$ to $\Gamma_E$ can exist for any field extension $E/F$.

    Now assume that $|\cA_1| \geq |\cB_1|$ and $f \in \rad{I}$ for all maximal minors $f$ of the matrix $[\alpha_{ab}]_{a \in \cA_1, b \in \cB_1}$.
    We argue by contradiction.
    Assume that there is a surjective algebra homomorphism $\psi \colon \Lambda_E \to \Gamma_E$ for some field extension $E/F$.
    Then the representation matrix $[\alpha_{ab}]_{a \in \cA, b \in \cB}$ of $\psi$ with respect to $\cA$ and $\cB$ is defined by
    \begin{equation*}
        \psi(a) = \sum_{b \in \cB} \alpha_{ab} b
        \qquad
        (a \in \cA).
    \end{equation*}
    We simply write $F[\upalpha_{a b}]$ for the polynomial algebra $F[\, \upalpha_{a b} \mid a \in \cA, b \in \cB \,]$ (the upright letters are used for corresponding variables).
    Since $\psi$ is an algebra homomorphism,
    \begin{equation*}
        \psi(xy)       = \sum_{c \in \cB}\left(\sum_{z \in \cA} \sigma_{xy}^z \alpha_{zc}\right)c
        \quad\text{and}\quad
        \psi(x)\psi(y) = \sum_{c \in \cB}\left(\sum_{a, b \in \cB} \tau_{ab}^c \alpha_{xa} \alpha_{yb}\right)c
    \end{equation*}
    coincide for all $x, y \in \cA$.
    In other words, $(\alpha_{ab})$ is an $E$-solution of the simultaneous equations
    \begin{equation*}
        \sum_{z \in \cA} \sigma_{xy}^z \upalpha_{zc} - \sum_{a, b \in \cB} \tau_{ab}^c \upalpha_{xa} \upalpha_{yb} = 0 \\
        \qquad
        (x, y \in \cA, c \in \cB)
    \end{equation*}
    defined over $F$.
    So \[ \eta \colon F[\upalpha_{a b}]/I \to E,\ \upalpha_{ab} + I \mapsto \alpha_{ab} \] is a unital $F$-algebra homomorphism, which corresponds to $\psi$ through a canonical bijection of sets
    \begin{equation*}
        \Hom_E(\Lambda_E, \Gamma_E) \cong \Hom_F(F[\upalpha_{ab}]/I, E).
    \end{equation*}
    By \cref{lem:RadicalSquare}, the induced linear map $\psi^* \colon \Lambda_E/\Lambda_E^2 \to \Gamma_E/\Gamma^2_E$ is also surjective.
    Note that the dimension of $\Lambda_E/\Lambda_E^2$ equals the size of $\cA_1$ and the dimension of $\Gamma_E/\Gamma^2_E$ the size of $\cB_1$, also by \cref{lem:RadicalSquare}.
    Since $f \in \rad{I}$ for all maximal minors $f$ of $[\upalpha_{ab}]_{a \in \cA_1, b \in \cB_1}$, the matrix $[\alpha_{ab}]_{a \in \cA_1, b \in \cB_1}$ does not have full rank.
    As $|\cA_1| \geq |\cB_1|$ holds by assumption, this implies that the column vectors of $[\alpha_{ab}]_{a \in \cA_1, b \in \cB_1}$ are linearly dependent.
    Hence the induced linear map $\psi^* \colon \Lambda_E/\Lambda^2_E \to \Gamma_E/\Gamma^2_E$ is not surjective, a contradiction.

    \eqref{item:second}
    We argue by contraposition.
    Assume that $|\cA_1| \geq |\cB_1|$ and for no finite extension of fields $E/F$ there exists a surjective homomorphism $\Lambda_E \to \Gamma_E$.
    Let $\overline{F}$ be an algebraic closure of $F$.
    We first observe that neither does a surjective homomorphism $\Lambda_{\overline{F}} \to \Gamma_{\overline{F}}$ exist:
    indeed, if $\psi$ is such a surjective homomorphism with representation matrix $[\alpha_{ab}]_{a \in \cA, b \in \cB}$, then each entry is algebraic over $F$ and hence the field $E = F(\, \alpha_{ab} \mid a \in \cA, b \in \cB \,)$ is a finite extension of $F$.
    But then $\psi$ defines a surjective homomorphism $\Lambda_E \to \Gamma_E$, in contradiction with our assumption.

    Let $f$ be a maximal minor of the matrix $[\upalpha_{ab}]_{a \in \cA_1, b \in \cB_1}$.
    As no surjective homomorphism $\Lambda_{\overline{F}} \to \Gamma_{\overline{F}}$ exists, if $[\alpha_{ab}]_{a \in \cA, b \in \cB}$ is the representation matrix of some homomorphism $\Lambda_{\overline{F}} \to \Gamma_{\overline{F}}$, then by \cref{lem:RadicalSquare} the column vectors of the matrix $[\alpha_{ab}]_{a \in \cA_1, b \in \cB_1}$ are linearly dependent.
    As $|\cA_1| \geq |\cB_1|$ by assumption, this implies that $[\alpha_{ab}]_{a \in \cA_1, b \in \cB_1}$ does not have full rank.
    By a correspondence
    \begin{equation*}
        \Hom_{\overline{F}}(\Lambda_{\overline{F}}, \Gamma_{\overline{F}}) \cong \Hom_{\overline{F}}(\overline{F}[\upalpha_{ab}]/\overline{F}I, \overline{F}),
    \end{equation*}
    the maximal minor $f$ vanishes at every common zero of $\overline{F}
        I$.
    Hence, Hilbert's Strong Nullstellensatz \cite[Chapter VII, Theorem 14]{ZariskiSamuel} implies $f^m \in \overline{F}I$ for some $m \geq 1$.
    As $\overline{F}I \cap F[\upalpha_{a b}] = I$ by \cref{lem:IdelaIntersection}, we conclude $f^m \in I$.
\end{proof}
\index{$\eta$ : Algebra Homomorphism}

The above result may be presented as the following procedure.

\begin{procedure}[H]
    \Input{finite-dimensional nilpotent algebras $\Lambda$ and $\Gamma$ over a field $F$}
    \Output{$E \otimes_F \Gamma$ is not a homomorphic image of $E \otimes_F \Lambda$ for all field extensions $E/F$ or\\  $E \otimes_F \Gamma$ is a homomorphic image of $E \otimes_F \Lambda$ for some finite extension $E/F$}
    \BlankLine
    $\cA \gets \text{a filtered basis of $\Lambda$}$\;
    $\cB \gets \text{a filtered basis of $\Gamma$}$\;
    $(\sigma_{x y}^z) \gets \text{the structure constants of $\Lambda$ with respect to $\cA$}$\;
    $(\tau_{a b}^c) \gets \text{the structure constants of $\Gamma$ with respect to $\cB$}$\;
    $I \gets \gen{ \sum_{z \in \cA} \sigma_{xy}^z \upalpha_{zc} - \sum_{a, b \in \cB} \tau_{ab}^c \upalpha_{xa} \upalpha_{yb} \given x, y \in \cA, c \in \cB }$\;
    \eIf{$|\cA_1| < |\cB_1|$ or $f \in \rad{I}$ for all maximal minors $f$ of the matrix $[\upalpha_{a b}]_{a \in \cA_1, b \in \cB_1}$}{
        \Return{$E \otimes_F \Gamma$ is not a homomorphic image of $E \otimes_F \Lambda$ for all field extensions $E/F$}\;
    }{
        \Return{$E \otimes_F \Gamma$ is a homomorphic image of $E \otimes_F \Lambda$ for some finite extension $E/F$}\;
    }
    \caption{NaiveGeometricTest($\Lambda$, $\Gamma$)}
\end{procedure}

\index{$f$ : Polynomial}
This procedure terminates for a field $F$ over which there is an algorithm to solve the radical membership problem for polynomial algebras.
There are algorithms and implementations for the problem based on Gr\"obner bases for the prime field cases, which are the most relevant to us.

\begin{proof}[Correctness of \NaiveGeometricTest]
    It is clear from \cref{prop:Minors}.
\end{proof}

\vspace*{.5cm}
We now come to the more precise procedure in the context of the Modular Isomorphism Problem.
For a finite $p$-group $G$ and $F$ a field of characteristic $p$, the augmentation ideal $\Delta(FG)$ of $FG$ is a nilpotent algebra and it is well known that $FG \cong FH$ as unital $F$-algebras if and only if $\Delta(FG) \cong \Delta(FH)$ as $F$-algebras, cf.~\cite[2.~Corollary (a)]{Eick08}.
Let $\FF_q$ denote the field with $q$ elements.
\index{$q$ : Prime Power}
\index{$F_q$ @ $\FF_q$ : Field}
Hence, we may apply \NaiveGeometricTest to $\Delta(\FF_p G)$ and $\Delta(\FF_p H)$ to approach the Modular Isomorphism Problem, but it is possible to reduce computations further.
A first idea consists in considering $\Delta(\FF_p Q)$ instead of $\Delta(\FF_p H)$ for a homomorphic image $Q$ of $H$ that is not a homomorphic image of $G$.
A second idea consists in working with quotients of the algebras rather than the whole algebras;
usually, it is enough to work with much smaller algebras.
These ideas may be presented as follows.

\begin{procedure}[H]
    \Input{finite $p$-groups $G$ and $H$ of the same order}
    \Output{$FG \not\cong FH$ for all fields $F$ of characteristic $p$ or\\ $KG \cong KH$ for some finite field $K$ of characteristic $p$}
    \BlankLine
    \For{\text{a homomorphic image $Q$ of $H$ that is not a homomorphic image of $G$}}{
        $t \gets \min\set{ n \geq 1 \given \Delta^n(\FF_pQ) = 0 }$\;
        \For{$\ell \gets 1$ \KwTo $t-1$}{
            $\Lambda \gets \Delta(\FF_pG)/\Delta^{\ell+1}(\FF_pG)$\;
            $\Gamma \gets \Delta(\FF_pQ)/\Delta^{\ell+1}(\FF_pQ)$\;
            \If{$\NaiveGeometricTest(\Lambda, \Gamma)$ returns $F \otimes_{\FF_p} \Gamma$ is not a homomorphic image of $F \otimes_{\FF_p} \Lambda$ for all field extensions $F/\FF_p$}{
                \Return{$FG \not\cong FH$ for all fields $F$ of characteristic $p$}\;
            }
        }
    }
    \Return{$KG \cong KH$ for some finite field $K$ of characteristic $p$}\;
    \caption{RefinedGeometricTest($G$, $H$)}
\end{procedure}
\begin{proof}[Correctness of \RefinedGeometricTest]
    Suppose that the output is not ``$KG \cong KH$ for some finite field $K$ of characteristic $p$'', so $\NaiveGeometricTest(\Lambda, \Gamma)$ has returned that no surjective homomorphism exists from $F \otimes_{\FF_p} \Lambda$ to $F \otimes_{\FF_p} \Gamma$ for all field extensions $F/\FF_p$.
    Here $\Lambda = \Delta(\FF_pG)/\Delta^{\ell+1}(\FF_pG)$ and $\Gamma = \Delta(\FF_pQ)/\Delta^{\ell+1}(\FF_pQ)$ for some $\ell$ and some $Q$.
    Let $F$ be a field of characteristic $p$.
    It is clear that $\Delta(FQ)$ is a homomorphic image of $\Delta(FH)$, while $\Delta(FQ)$ is not a homomorphic image of $\Delta(FG)$ from the procedure.
    Hence $FG \not\cong FH$.

    On the other hand, assume the output is ``$KG \cong KH$ for some finite field $K$ of characteristic $p$''.
    If $G$ and $H$ are isomorphic, then of course $KG \cong KH$ holds for every field $K$.
    In case $G$ and $H$ are not isomorphic, the output means that $\NaiveGeometricTest(\Lambda, \Gamma)$ was applied to the values $\Lambda = \Delta(\FF_p G)/\Delta^{\ell + 1}(\FF_p G)$ and $\Gamma = \Delta(\FF_p H)$, for $\ell$ such that $\Delta(\FF_p H)^{\ell + 1} = 0$, and returned that there exists a surjective homomorphism from $K \otimes_{\FF_p} \Lambda$ to $K \otimes_{\FF_p} \Gamma$ for some finite field $K$ of characteristic $p$.
    In particular, the $F$-dimension of $\Lambda$ is bigger than or equal to the $F$-dimension of $\Gamma$.
    The latter equals $|H|-1$ and the dimension of $\Lambda$ can reach this value only if $\Delta(\FF_p G)^{\ell+1} = 0$, as the orders of $G$ and $H$ are equal by assumption.
    So, in this case there exists a surjective homomorphism $\Delta(K G) \to \Delta(KH)$ and as these algebras have the same finite $K$-dimension it must be an isomorphism.
    We conclude that in this case $KG \cong KH$.
\end{proof}

Note that \RefinedGeometricTest is asymmetric on inputs $G$ and $H$.
In practice a smaller ideal than the $I$ appearing in \NaiveGeometricTest can be chosen wisely by taking only distinctive relations into account as in \eqref{eq:IdealD8Q8}.
We comment on our actual implementation further in \Cref{sec:ComputationsOrder64}.

As a by-product, we obtain another proof of the reduction theorem by Garc\'ia-Lucas and del R\'io \cite[Corollary B]{GarciaLucasDelRio24} from \cref{prop:Minors}.
\begin{cor}\label{cor:Reduction}
    Let $G$ and $H$ be finite $p$-groups and $E$ a field of characteristic $p > 0$.
    If $EG \cong EH$ as unital algebras, then $KG \cong KH$ as unital algebras for some finite field $K$ of characteristic $p$.
\end{cor}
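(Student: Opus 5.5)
Since $EG \cong EH$ forces $|G| = |H|$ by comparing dimensions, I will apply \cref{prop:Minors} over $F = \FF_p$ with $\Lambda = \Delta(\FF_p G)$ and $\Gamma = \Delta(\FF_p H)$. Both are finite-dimensional nilpotent $\FF_p$-algebras, because the augmentation ideal of a modular group algebra of a finite $p$-group is nilpotent. The field $E$ has characteristic $p$, so it contains $\FF_p$ and is a field extension of it. Moreover $E \otimes_{\FF_p} \FF_p G \cong EG$, and this isomorphism sends $E \otimes_{\FF_p} \Delta(\FF_p G)$ onto $\Delta(EG)$, since tensoring with $E$ is exact and preserves the augmentation map. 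By \cite[2.~Corollary (a)]{Eick08}, the hypothesis $EG \cong EH$ gives $\Delta(EG) \cong \Delta(EH)$, that is, $E \otimes_{\FF_p} \Lambda \cong E \otimes_{\FF_p} \Gamma$. In particular, $E \otimes_{\FF_p} \Gamma$ is a homomorphic image of $E \otimes_{\FF_p} \Lambda$.

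Fix filtered bases $\cA$ and $\cB$ of $\Lambda$ and $\Gamma$. The conclusion of \cref{prop:Minors}\eqref{item:first} fails for the extension $E/\FF_p$, so its hypothesis must fail. Hence $|\cA_1| \geq |\cB_1|$, and some maximal minor $f$ of $[\upalpha_{ab}]_{a \in \cA_1, b \in \cB_1}$ satisfies $f \notin \rad{I}$. Then \cref{prop:Minors}\eqref{item:second} gives a finite extension $K/\FF_p$, so $K$ is a finite field of characteristic $p$, together with a surjective algebra homomorphism $K \otimes_{\FF_p} \Lambda \to K \otimes_{\FF_p} \Gamma$.

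To upgrade surjectivity to an isomorphism, note that both algebras have $K$-dimension $|G| - 1 = |H| - 1$, so the surjective $K$-linear map between them is bijective. This gives $\Delta(KG) \cong \Delta(KH)$, and applying \cite[2.~Corollary (a)]{Eick08} once more yields $KG \cong KH$ as unital algebras.

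The step needing the most care is the first one: identifying $E \otimes_{\FF_p} \Delta(\FF_p G)$ with $\Delta(EG)$, and checking that the equivalence of \cite{Eick08} holds over arbitrary fields of characteristic $p$, not only over finite ones. I expect both to be routine. Everything else is read off directly from the dichotomy in \cref{prop:Minors}.
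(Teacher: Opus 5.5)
Your proposal is correct and is essentially the argument the paper intends. The paper states the corollary as a by-product of \cref{prop:Minors}: apply it to $\Delta(\FF_p G)$ and $\Delta(\FF_p H)$, use the isomorphism over $E$ to rule out the hypothesis of item (1) so that item (2) gives a surjection over a finite field $K$, and upgrade this surjection to an isomorphism by comparing dimensions, as in the correctness proof of \textsc{RefinedGeometricTest}. The two points you flag are indeed routine: for a finite $p$-group the augmentation ideal is the Jacobson radical of the local algebra $EG$ over any field $E$ of characteristic $p$, so unital isomorphisms restrict to it, and isomorphisms of augmentation ideals extend to the unitalizations.
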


\section{Illustration: groups of order 8}\label{sec:Order8}
This section is devoted to illustrating the geometric approach described in the previous section with the first non-trivial example.
Namely, the groups we are going to work with are the dihedral group $G = \Dih{8}$ and the quaternion group $H = \Quat{8}$ of order~$8$ with presentations
\begin{alignat*}{8}
     & \Dih{8}  & \  & = & \ \langle\, x, y, z \mid x^2 & = 1, & \ y^2 & = 1, & \ z^2 & = 1, & \ [y, x] & = z, & \ [z, x] & = 1, & \ [z, y] & = 1 \,\rangle, \\
     & \Quat{8} & \  & = & \ \langle\, a, b, c \mid a^2 & = c, & \ b^2 & = c, & \ c^2 & = 1, & \ [b, a] & = c, & \ [c, a] & = 1, & \ [c, b] & = 1 \,\rangle.
\end{alignat*}
\index{$D_8$ @ $\Dih{8}$ : Finite Group}
\index{$Q_8$ @ $\Quat{8}$ : Finite Group}
\index{$x$ : Group Element}
\index{$y$ : Group Element}
\index{$z$ : Group Element}
\index{$a$ : Group Element}
\index{$b$ : Group Element}
\index{$c$ : Group Element}
Let $R$ be a commutative unital ring in which $2$ is not invertible.
Take a maximal ideal $\mathfrak{m}$ of $R$ containing $2$ and set $F = R/\mathfrak{m}$.
Also set $X = x - 1$, $Y = y - 1$ in $\Delta(FG)$ and $A = a - 1$, $B = b - 1$, $C = c - 1$ in $\Delta(FH)$.
\index{$X$ : Algebra Element}
\index{$Y$ : Algebra Element}
\index{$A$ : Algebra Element}
\index{$B$ : Algebra Element}
\index{$C$ : Algebra Element}
Then it can be verified that $\set{ A^i B^j C^k \given 0 \leq i, j, k < 2 }$ forms a basis of $FH$ over $F$, which is often called a Jennings basis.
\index{$i$ : Nonnegative Integer}
\index{$j$ : Nonnegative Integer}
\index{$k$ : Nonnegative Integer}
Note that $1$ is an element of this basis.
We could in fact work with $H$ as a basis, but it will become clear that this is less convenient, as it does not respect the filtration by powers of the augmentation ideal.
In the sense of the previous section we will work with the nilpotent algebras $\Lambda = \Delta(FG)/\Delta^4(FG)$ and $\Gamma = \Delta(FH)/\Delta^4(FH)$ and show that no surjective homomorphism $\Lambda \to \Gamma$ exists.

Let us assume that there exists an isomorphism of unital algebras $RG \to RH$ to get a contradiction.
Then we can obtain a normalized isomorphism $\psi\colon FG \to FH$.
From the adjunction $\Hom(FG, FH) \cong \Hom(G, (FH)^\times)$, the homomorphism $\psi$ comes from a group homomorphism $G \to (FH)^\times$, where $(FH)^\times$ denotes the group of units in $FH$.
Since the group $G$ is generated by $x$ and $y$, the values on these two points in $(FH)^\times$ completely determine the homomorphism.
This is equivalent to determining the values on $X$ and $Y$ in $\Delta(FH)$, as $\psi$ is normalized.
Let
\begin{equation*}
    \begin{aligned}
        \psi(X) & \equiv \alpha A + \beta B + \kappa AB + \lambda C, \\
        \psi(Y) & \equiv \gamma A + \delta B + \mu AB + \nu C
    \end{aligned}
    \mod \Delta^3(FH)
\end{equation*}
where $\alpha, \beta, \gamma, \delta, \kappa, \lambda, \mu, \nu \in F$.
\index{$\alpha$ : Scalar}
\index{$\beta$ : Scalar}
\index{$\gamma$ : Scalar}
\index{$\delta$ : Scalar}
\index{$\kappa$ : Scalar}
\index{$\lambda$ : Scalar}
\index{$\mu$ : Scalar}
\index{$\nu$ : Scalar}
Note here that $C \in \Delta^2(FH)$, cf.~\cite[Lemma 2.5]{MargolisSakuraiJMSJ}.
As $\psi$ respects multiplication, these coefficients must satisfy certain conditions.
From the first two defining relations of $G$, we have $X^2 = Y^2 = 0$.
We note that the following congruences hold:
\begin{equation*}
    \begin{aligned}
        BA  & \equiv AB + C + AC + BC, \\
        A^2 & \equiv B^2 \equiv C
    \end{aligned}
    \mod \Delta^4(FH).
\end{equation*}
Using these, direct calculations show that
\begin{equation*}
    \begin{aligned}
        \psi(X)^2 & \equiv (\alpha^2 + \alpha\beta + \beta^2)C + (\alpha\beta + \alpha\kappa)AC + (\alpha\beta + \beta\kappa)BC, \\
        \psi(Y)^2 & \equiv (\gamma^2 + \gamma\delta + \delta^2)C + (\gamma\delta + \gamma\mu)AC + (\gamma\delta + \delta\mu)BC
    \end{aligned}
    \mod \Delta^4(FH).
\end{equation*}
Let $I$ be an ideal in $\FF_2[\upalpha, \upbeta, \upgamma, \updelta, \upkappa, \uplambda, \upmu, \upnu]$ generated by these coefficients (the upright letters are used for corresponding variables), that is,
\begin{equation}\label{eq:IdealD8Q8}
    I = \langle \upalpha^2 + \upalpha\upbeta + \upbeta^2,\ \upalpha\upbeta + \upalpha\upkappa,\ \upalpha\upbeta + \upbeta\upkappa,\ \upgamma^2 + \upgamma\updelta + \updelta^2,\ \upgamma\updelta + \upgamma\upmu,\ \upgamma\updelta + \updelta\upmu \rangle.
\end{equation}
\index{$\alpha$ @ $\upalpha$ : Variable}
\index{$\beta$ @ $\upbeta$ : Variable}
\index{$\gamma$ @ $\upgamma$ : Variable}
\index{$\delta$ @ $\updelta$ : Variable}
\index{$\kappa$ @ $\upkappa$ : Variable}
\index{$\lambda$ @ $\uplambda$ : Variable}
\index{$\mu$ @ $\upmu$ : Variable}
\index{$\nu$ @ $\upnu$ : Variable}
Then, with the help of a computer algebra system, such as for example \texttt{Oscar}, we can easily get
\begin{equation*}
    \begin{vmatrix} \upalpha & \upbeta \\ \upgamma & \updelta \end{vmatrix} \in \rad{I},
\end{equation*}
or with an explicit exponent and coefficients,
\begin{align*}
    \begin{vmatrix} \upalpha & \upbeta \\ \upgamma & \updelta \end{vmatrix}^4
     & =
    \upbeta^2\upgamma^4(\upalpha^2 + \upalpha\upbeta + \upbeta^2)
    + \upbeta^2\upgamma^4(\upalpha\upbeta + \upalpha\upkappa)
    + \upalpha\upbeta\upgamma^4(\upalpha\upbeta + \upbeta\upkappa) \\
     & \quad
    + \upalpha^4\updelta^2(\upgamma^2 + \upgamma\updelta + \updelta^2)
    + \upalpha^4\updelta^2(\upgamma\updelta + \upgamma\upmu)
    + \upalpha^4\upgamma\updelta(\upgamma\updelta + \updelta\upmu)
\end{align*}
using the functions \texttt{radical\_membership} or \texttt{coordinates}.
In particular, the determinant $\begin{vmatrix} \alpha & \beta \\ \gamma & \delta \end{vmatrix}$ must vanish.

On the other hand, as $\psi$ is an isomorphism, the induced linear map $\Delta(FG)/\Delta^2(FG) \to \Delta(FH)/\Delta^2(FH)$ should be an isomorphism.
Hence its determinant must be invertible and nonzero, which contradicts the facts we have seen in the previous paragraph.
Therefore, we establish $RG \not\cong RH$.

\begin{rmk}
    In the case $F = \FF_2$, proving $FG \not\cong FH$ is easier than the general case.
    This can be seen from the geometric viewpoint as follows.
    First, it is enough to work with $\psi(X) \equiv \alpha A + \beta B \mod \Delta^2(FH)$.
    Second, it is enough to use only the first relation $X^2 = 0$.
    Then $\psi(X)^2 \equiv (\alpha^2 + \alpha\beta + \beta^2)C \mod \Delta^3(FH)$.
    Since $F = \FF_2$, the coefficients also satisfy $\alpha^2 + \alpha = \beta^2 + \beta = 0$.
    As before, let
    \begin{equation*}
        I = \langle \upalpha^2 + \upalpha\upbeta + \upbeta^2,\ \upalpha^2 + \upalpha,\ \upbeta^2 + \upbeta \rangle
    \end{equation*}
    be an ideal in $\FF_2[\upalpha, \upbeta]$ and observe that $\upalpha, \upbeta \in I$ as
    \begin{align*}
        \upalpha & = \upalpha(\upalpha^2 + \upalpha\upbeta + \upbeta^2) + (\upalpha + \upbeta + 1)(\upalpha^2 + \upalpha) + \upalpha(\upbeta^2 + \upbeta), \\
        \upbeta  & = \upbeta(\upalpha^2 + \upalpha\upbeta + \upbeta^2) + \upbeta(\upalpha^2 + \upalpha) + (\upalpha + \upbeta + 1)(\upbeta^2 + \upbeta).
    \end{align*}
    \index{$I$ : Ideal}
    This shows that $\psi(X) \in \Delta^2(FH)$, a contradiction.
    In a more complicated setting, a somewhat similar argument is already used in the proof of Proposition 3.2 in \cite{MargolisSakuraiJMSJ}, for example.
    \fxnote{
        Check how the proposition is numbered in the published version at the end.
    }
    In this sense, the geometric approach presented in this paper can be seen as an expanded formulation of a group base approximation used in our previous studies \cite{MargolisSakurai25,MargolisSakuraiJMSJ}.

    The fact that
    \begin{equation*}
        \Delta(\FF_2\Dih{8})/\Delta^3(\FF_2\Dih{8}) \not\cong \Delta(\FF_2\Quat{8})/\Delta^3(\FF_2\Quat{8})
        \quad\text{while}\quad
        \Delta(\FF_4\Dih{8})/\Delta^3(\FF_4\Dih{8}) \cong \Delta(\FF_4\Quat{8})/\Delta^3(\FF_4\Quat{8})
    \end{equation*}
    is also explicitly calculated in \cite[Example 2.11]{MargolisSakuraiSaoPaulo}.
\end{rmk}

\begin{rmk}
    At first glance, one might hope that, in the context of the MIP, if $G$ and $H$ are groups such that $FG \not\cong FH$ and $Q$ is a quotient of $H$ which is not isomorphic to a quotient of $G$, then there is no surjective homomorphism $\Delta(FG) \to \Delta(FQ)$.

    This is however not the case, as can be seen from the quotients of the known negative solutions to the MIP given in detail in the appendix of \cite{MargolisSakurai25}.
    One can also easily check this using computer algebra for the smallest negative solutions.
    These are the groups $G$ and $H$ of order $512$ with group identifiers 453 and 456 in the Small Groups Library of GAP (given explicitly in \cite{GarciaLucasMargolisDelRio22}).
    Each of them has a quotient of order $64$ which is not a quotient of the other group.
    But of course the isomorphism between the group algebras of $G$ and $H$ implies that each of them maps onto the group algebra of any quotient of the other.
\end{rmk}

\section{Groups of order 64}\label{sec:ComputationsOrder64}

The groups of order $64$ are the most classical class for computer algebraic investigations of the Modular Isomorphism Problem.
It was first studied by Wursthorn with the first implemented algorithm for the MIP \cite{Wursthorn93}.
He was able to solve the MIP over the prime field for all groups of order $64$ in this project.
Many more details are given in Wursthorn's Diplomarbeit \cite{Wursthorn90}.
These results were independently verified again by Eick when she implemented the second algorithm to study the MIP \cite{Eick08, MargolisMoede22}.

Note that there are exactly $267$ groups of order $64$ up to isomorphism, so that, if we would compare each pair of groups, we would have to consider $35511$ cases.
However, from the known theoretical results on the MIP and the properties of a finite $p$-group which are known to be invariant under isomorphisms of modular group algebras we are reduced to the $18$ cases which are investigated below.
Here we apply our algorithm to show that the MIP has a positive solution for groups of order $64$ over all fields of characteristic $2$.

\begin{prop}\label{prop:ord64}
    Let $F$ be a field of characteristic $2$ and let $G$ and $H$ be groups.
    If the order of $G$ divides $64$, then $FG \cong FH$ as unital algebras implies $G \cong H$ as groups.
\end{prop}
\begin{proof}
    For groups of order $32$ this is recorded in \cite[Theorem 2.1(5)]{MargolisSakuraiSaoPaulo} and for groups of order $16$ or less it follows also, as each such group $G$ is either metacyclic or satisfies $|G/\Z{G}| \leq 4$, so a positive solution also follows from \cite[Theorem 2.1]{MargolisSakuraiSaoPaulo}.

    To apply \RefinedGeometricTest to groups of order $64$, we first apply the command \texttt{MIPSplitGroups\-ByGroupTheoreticalInvariants\-AllFields} of \texttt{ModIsom}\footnote{Applying the command from version 3.1.0, there is a mistake in previous versions.
    } \cite{ModIsom} from the computer algebra system \texttt{GAP} \cite{GAP}.
    This results in a list of all groups of order $64$ for which the MIP over all fields remains open at the moment.
    It turns out that we are left with $18$ pairs of groups to investigate, the data on which known invariants can be used to reduce to these cases is given in \cref{sec:Appendix}.
    The groups involved and other relevant details of our computations are summarized in \cref{tbl:ComputationsGroups64}.
    Here the columns mean the following:

    \begin{description}
        \item[$G$, $H$] The group identifiers $r$ and $s$ we are considering, namely whether a surjective homomorphism from $\Delta(FG)$ to $\Delta(FH)$ exists for $G = \SmallGroup(64,r)$ and $H = \SmallGroup(64, s)$.
        \item[$Q$] We actually investigate whether there is a surjective homomorphism of $F$-algebras from $\Delta(FG)$ to $\Delta(FQ)$.
              The property of $Q$ is that there exists a surjective homomorphism from $H$ to $Q$ but not from $G$ to $Q$.
              The entry in the table gives the group identifier in the Small Groups Library.
              In particular, the first number is its order.
        \item[$\ell$] Least integer $\ell$ such that there is no surjective homomorphism from $\Delta(FG)/\Delta^{\ell+1}(FG)$ to $\Delta(FH)/\Delta^{\ell+1}(FH)$ for all fields $F$ of characteristic $2$.
        \item[$\ell(\FF_4)$] Least integer $\ell$ such that there is no surjective homomorphism from $\Delta(\FF_4G)/\Delta^{\ell+1}(\FF_4G)$ to $\Delta(\FF_4H)/\Delta^{\ell+1}(\FF_4H)$.
              This is computed by applying \texttt{MIP\-SplitGroups\-ByAlgebras} of \texttt{ModIsom} as \texttt{MIP\-SplitGroups\-ByAlgebras([G,H], 2)}.
              \footnote{For the last case (238, 239) the calculation in \texttt{ModIsom} does not finish in a reasonable time, so the integer is a consequence of our results.}
        \item[$\ell(\FF_2)$] Least integer $\ell$ such that there is no surjective homomorphism from $\Delta(\FF_2G)/\Delta^{\ell+1}(\FF_2G)$ to $\Delta(\FF_2H)/\Delta^{\ell+1}(\FF_2H)$.
              This is computed by applying \texttt{MIP\-SplitGroups\-ByAlgebras} of \texttt{ModIsom} to the input \texttt{[G,H]}.
        \item[$\dim \Lambda$] Dimension of $\Lambda = \Delta(FG)/\Delta^{\ell+1}(FG)$.
        \item[$\dim \Gamma$] Dimension of $\Gamma = \Delta(FQ)/\Delta^{\ell+1}(FQ)$.
              So we are showing that there is no surjective homomorphism from a certain algebra of dimension $\dim\Lambda$ to a certain algebra of dimension $\dim \Gamma$.
        \item[$\rank{G}$] Size of minimal generating set of $G$.
              The polynomial algebra in which the computation takes place has $\rank{G}\cdot \dim \Gamma$ variables.
        \item[$\rank{Q}$] Size of minimal generating set of $Q$.
              Hence, we are considering whether all the minors of a $\rank{G} \times \rank{Q}$-matrix lie in a certain ideal.
        \item[$\FF_2$-inv.] Whether some known $\FF_2$-invariants are able to distinguish the group algebras $\FF_2G$ and $\FF_2H$.
              We remark that every entry ``Known'' can be derived from an invariant connected to the small group algebra:
              for all but the first case (13,14) the invariant $G/\gamma_2(G)^p\gamma_3(G)$ is sufficient, i.e. in these cases $G/\gamma_2(G)^p\gamma_3(G) \not\cong H/\gamma_2(H)^p\gamma_3(H)$.
              Here $\gamma_n(G)$ denotes the $n$-th term of the lower central series of $G$.
              This is an invariant by the work of Sandling \cite{Sandling89}.
              For the pair (13,14) one can use the invariant $G/\gamma_2(G)^p\gamma_4(G)$, as the groups are $2$-generated.
              This is an invariant by \cite[Theorem 1.2]{MargolisMoede22}.
    \end{description}

    \index{$\gamma_n(G)$ : Lower Central Series}

    \begin{table}
        \caption{Results for the remaining groups of order 64.}
        \label{tbl:ComputationsGroups64}
        \begin{tabular}{cccccccccccc}
            \toprule
            $G$ & $H$ & $Q$      & $\ell$ & $\ell(\FF_4)$ & $\ell(\FF_2)$ & $\dim \Lambda$ & $\dim \Gamma$ & $\rank{G}$ & $\rank{Q}$ & $\FF_2$-inv. \\
            \midrule
            13  & 14  & (32, 7)  & 6      & 6             & 4             & 27             & 21            & 2          & 2          & Known        \\
            65  & 70  & (32, 25) & 2      & 2             & 2             & 9              & 8             & 3          & 3          & Known        \\
            105 & 104 & (16, 6)  & 4      & 4             & 4             & 23             & 8             & 3          & 2          & Known        \\
            142 & 155 & (8, 4)   & 3      & 3             & 2             & 15             & 6             & 3          & 2          & Known        \\
            142 & 157 & (8, 4)   & 3      & 3             & 2             & 15             & 6             & 3          & 2          & Known        \\
            155 & 157 & (16, 8)  & 7      & 7             & 7             & 47             & 14            & 3          & 2          & Unknown      \\
            156 & 158 & (16, 9)  & 7      & 7             & 7             & 47             & 14            & 3          & 2          & Unknown      \\
            160 & 156 & (16, 8)  & 7      & 7             & 7             & 47             & 14            & 3          & 2          & Unknown      \\
            160 & 158 & (16, 9)  & 7      & 7             & 7             & 47             & 14            & 3          & 2          & Unknown      \\
            167 & 173 & (32, 34) & 3      & 3             & 2             & 15             & 15            & 3          & 3          & Known        \\
            167 & 176 & (32, 34) & 3      & 3             & 2             & 15             & 15            & 3          & 3          & Known        \\
            176 & 173 & (16, 8)  & 7      & 7             & 7             & 47             & 14            & 3          & 2          & Unknown      \\
            168 & 179 & (8, 4)   & 3      & 3             & 2             & 15             & 6             & 3          & 2          & Known        \\
            168 & 180 & (8, 4)   & 3      & 3             & 2             & 15             & 6             & 3          & 2          & Known        \\
            180 & 179 & (16, 8)  & 7      & 7             & 7             & 47             & 14            & 3          & 2          & Unknown      \\
            172 & 182 & (8, 4)   & 3      & 3             & 2             & 15             & 6             & 3          & 2          & Known        \\
            175 & 181 & (8, 4)   & 3      & 3             & 2             & 15             & 6             & 3          & 2          & Known        \\
            239 & 238 & (16, 13) & 2      & 2             & 2             & 12             & 7             & 4          & 3          & Known        \\
            \bottomrule
        \end{tabular}
    \end{table}

    These calculations complete the proof.
\end{proof}
\index{$Z(G)$ @ $\Z{G}$ : Center}

The proof of the main theorem now follows easily:

\begin{proof}[Proof of \Cref{main:64}]
    Take a maximal ideal $\mathfrak{m}$ of $R$ containing $2$ and set $F = R/\mathfrak{m}$.
    If $RG \cong RH$ then
    \begin{equation*}
        FG \cong F \otimes_R RG \cong F \otimes_R RH \cong FH.
    \end{equation*}
    Hence $G \cong H$ follows from \cref{prop:ord64}.
\end{proof}

\vspace*{.5cm}
We note that our approach has some similarities to the study of the MIP for groups of order $64$ over $\FF_2$ as presented by Hertweck and Soriano in \cite{HertweckSoriano06}.
Also there, the idea is that if an isomorphism of unital algebras $\FF_2G \to \FF_2H$ exists, then $G$ embeds into a certain section of $\FF_2H$ and in fact maps onto a section of $\FF_2 Q$ where $Q$ is a group which is isomorphic to a quotient of $H$ but not of $G$.
Actually, in many cases for the same pair $G$ and $H$ their choice of $Q$ coincides with ours.
They also show, for instance, by manual calculations that $\ell(\FF_2)$ for the groups $\SmallGroup(64, 173)$ and $\SmallGroup(64, 176)$ is at least $7$.
The main difference with our approach, apart from the use of algorithms from algebraic geometry, is that they choose so-called Zassenhaus ideals instead of powers of the augmentation ideal.
This has the advantage that smaller sections can be studied, but the disadvantage that a key property of the Zassenhaus ideals is not known to hold over general fields, cf.~\cite[Section 2.2, Difference 2]{MargolisSakuraiSaoPaulo}.

We finish with some remarks on the implementation of our ideas, the code is available at \cite{GitRep}.
We choose the computer algebra package \texttt{Oscar} \cite{Oscar,OSCAR-book} from \texttt{Julia} for our implementation, as it allows us to combine procedures available in \texttt{GAP} and \texttt{Singular} \cite{Singular}.
Let $r = \rank{G}$ and $s = \dim \Delta(\FF_pQ)/\Delta^{\ell+1}(\FF_pQ)$.
\index{$r$ : Positive Integer}
\index{$s$ : Positive Integer}
To obtain the polynomials used to generate the ideal $I$ we use the polynomial algebra $R = \FF_p[\, \upalpha_{i j} \mid 1 \leq i \leq r, \ 1 \leq j \leq s \,]$ and the arithmetic for nilpotent algebras available in the \texttt{GAP}-package \texttt{ModIsom} \cite{ModIsom}.
To do this we formally define a table algebra, in the sense of the package \texttt{ModIsom}, corresponding to $\Delta(RQ)/\Delta^{\ell+1}(RQ)$ and think of the element of $\Delta(RG)$ corresponding to the $i$-th generator of $G$ being mapped in this algebra to $\upalpha_{i 1}b_1+\dotsb+\upalpha_{i s}b_s$, where $b_1,\dotsc,b_s$ denotes an $R$-basis of $\Delta(RQ)/\Delta^{\ell+1}(RQ)$.
\index{$\alpha_{i j}$ : Scalar}
\index{$b_1, \dotsc, b_s$ : Algebra Element}
We then define an ideal $I$ generated by the polynomials which one obtains as coefficients applying the defining relations of the group $G$ to a set of elements of $\Delta(RQ)/\Delta^{\ell+1}(RQ)$ to which they correspond.
These defining relations come from a presentation of $G$ as a \texttt{simplified\_fp\_group}, but in principle other presentations and more or less relations could be used.
We note that as the $\Lambda$ we actually work with in the sense of \RefinedGeometricTest is $\Delta(\FF_p G)/\Delta(\FF_p G)^{\ell + 1}$, using all the defining relations of $G$ defines an ideal which might be bigger than the minimal one possible.
However, this provides a unified way of working with all the pairs of groups studied above and in fact speeds up computations considerably in many cases.

The basis $\{b_1,\dotsc,b_s\}$ as implemented in \texttt{ModIsom} is automatically filtered, so that the first $\rank{Q}$ elements provide a basis of $\Delta(RQ)/\Delta^{2}(RQ)$.
We then define the $(\rank{G} \times \rank{Q})$-matrix $[\upalpha_{i j}]_{1\leq i \leq \rank{G}, \ 1 \leq j \leq \rank{Q}}$ and test whether all the maximal minors of the matrix lie in $\rad{I}$.
For this we use the command \texttt{radical\_membership}, coming from \texttt{Singular}.
Moreover, to obtain values for $Q$ and $\ell$ which might be promising we analyze the quotients of $H$ which are not quotients of $G$, choosing a minimal one among them, such that $FQ$ is not a homomorphic image of $FG$.
Note that the isomorphism type of this $Q$ is in general not unique, there might be several groups of the same order having this property.
Hence, a user running our code might obtain some other group in the column $Q$ (in the $18$ cases above this can happen only for the pair with ID's 65 and 70).
Furthermore, we obtain the smallest $\ell$ such that $\Delta(\FF_qG)/\Delta^{\ell+1}(\FF_qG)$ is not isomorphic to $\Delta(\FF_qH)/\Delta^{\ell+1}(\FF_qH)$, for $q \in \{p,p^2\}$, by using the main commands of \texttt{ModIsom} such as \texttt{MIP\-SplitGroups\-ByAlgebras}.

On the practical side, our calculations do not need too much memory, the most extreme case allocates around 7 GB in total and occurs for the ID's 239 and 238.
Most of the cases work very fast, only the case of the groups with ID's 180 and 179 and the case of the groups with ID's 176 and 173 need more than one minute.
The latter is the most time consuming case, but it also finishes in about three hours.

To finish this article we wish to highlight an open problem which can be attacked by our procedure, although it seems doubtful it can be solved in reasonable time using our implementation.
It has been also stated in \cite[Question]{GarciaLucasDelRio24}:

\begin{que}
    Do there exist finite $p$-groups $G$ and $H$ such that $\FF_p G \not\cong \FF_p H$ but $\FF_q G \cong \FF_q H$ for some power $q$ of $p$?
\end{que}

\appendix
\section{Data for groups of order 64}\label{sec:Appendix}

We provide here the data used to reduce the MIP over all fields for groups of order $64$ to the $18$ cases described in \cref{sec:ComputationsOrder64}.
Throughout, $F$ denotes a field of characteristic $2$.
Namely, we first determine, if a group lies in a class of groups for which the MIP is known to have a positive solution over all fields.
Then, for the remaining groups, we list invariants which are able to show that for any other pair of groups $G$ and $H$ of order $64$ which do not appear among the $18$ pairs in \cref{sec:ComputationsOrder64} the group algebras $FG$ and $FH$ are not isomorphic.
The theoretical results are all listed in \cite{MargolisSakuraiSaoPaulo} and we separate the other groups by various known group-theoretical invariants, some of them rather proceeding from group cohomology.
Although we do not study cohomology rings in details, we remark that for all groups which are left and which are handled by our new algorithm in \cref{sec:ComputationsOrder64}, the Poincar\'e series encoding the dimensions of cohomology groups for the trivial module cannot distinguish them by \cite[Appendix]{CarlsonTownsley03} (cf.~also \cite{GreenKing15}).
We will refer to \cite{MargolisSakuraiSaoPaulo} which contains a summary of known invariants over all fields, although it is of course not the original source of all these invariants.

We first summarize which of the groups of order $64$ lie in a class for which the MIP is known to hold over all fields.
We use \cite[Theorems 2.1, 3.6, 3.9]{MargolisSakuraiSaoPaulo} and include a group only in one of the classes, even when in lies in several of them.
We denote by $Z(G)$ the center of a group $G$ and by $\Phi(G)$ its Frattini subgroup.
The identifiers of the groups in the Small Groups Library of \texttt{GAP} for which we can apply one of the theoretical results are the following.
\begin{itemize}
    \item $|G/\Z{G}| \leq 4$  \cite[Theorem 2.1(1)]{MargolisSakuraiSaoPaulo}: \\
          \noindent
          1, 2, 3, 17, 26, 27, 29, 44, 50, 51, 55, 56, 57, 58, 59, 83, 84, 85, 86, 87, 103, 112, 115, 126, 183, 184, 185, 192, 193, 194, 195, 196, 197, 198, 246, 247, 248, 260, 261, 262, 263, 267.\\

    \item $G$ is $2$-generated of nilpotency class $2$ \cite[Theorem 3.9]{MargolisSakuraiSaoPaulo}: \\
          \noindent
          18, 19, 28.\\

    \item $G$ is metacyclic  \cite[Theorem 3.6]{MargolisSakuraiSaoPaulo}:\\
          \noindent
          15, 16, 45, 46, 47, 48, 49, 52, 53, 54.\\

    \item $G$ is of nilpotency class $3$ and $|G/\Z{G}| = |\Frat{G}| = 8$ \cite[Theorem 2.1(6)]{MargolisSakuraiSaoPaulo}: \\
          \noindent
          95, 96, 97, 101, 106, 107, 108, 110, 118, 119, 120, 124.\\

    \item $\Z{G}$ is cyclic and $G/\Z{G}$ is dihedral \cite[Theorem 2.1(7)]{MargolisSakuraiSaoPaulo}: \\
          \noindent
          31, 40, 189.\\

    \item $G$ is $2$-generated and $G/\Z{G}$ is non-abelian dihedral and not a known negative solution of the MIP \cite[Theorem 2.1(8)]{MargolisSakuraiSaoPaulo}: \\
          \noindent
          6, 7, 20, 21, 22, 38, 39.\\
\end{itemize}
\index{$\Phi(G)$ @ $\Frat{G}$ : Frattini Subgroup}

Next, we can handle those groups which have an elementary abelian direct factor, as these reduce to smaller groups by \cite[Proposition 2.6]{MargolisSakuraiSaoPaulo} and for $2$-groups of order at most $32$ the Modular Isomorphism Problem over all fields has a positive answer, as mentioned in the proof of \Cref{main:64} above.
The maximal elementary abelian direct factor can be obtained using the function \texttt{Maximal\-ElementaryAbelian\-DirectFactor} from \texttt{ModIsom}.
Among the groups not listed among those which fall into a class with a positive solution above, the following have a non-trivial elementary abelian direct factor.
Again we list their ID's, these are:
\\ \noindent 90, 92, 93, 186, 187, 188, 202, 203, 204, 205, 207, 208, 209, 211, 212, 250, 251, 252, 253, 254, 255, 264, 265.
\\

The data for the remaining groups is organized in Tables \labelcref{tbl:2Generated}, \labelcref{tbl:3Generated} and \labelcref{tbl:4Generated} which correspond to the $2$-, $3$- and $4$-generated groups, respectively.
Note that the minimal number of generators is a known invariant, as it equals the $\FF_p$-dimension of $G/\Frat{G} = \Jen_1(G)/\Jen_2(G)$, cf.~\cite[Chapter 14, Lemma 2.7]{Passman77}.
Here by $\Jen_n(G)$ we denote the $n$-th term of the Jennings series of $G$ which coincides with the dimension subgroup series.
\index{$D_n(G)$ @ $\Jen_n(G)$ : Jennings Series}
This organization of the data also seems natural, as it corresponds to the order of groups in the Small Groups Library.
We note that except for one group every $5$-generated group of order $64$ is already listed above, so a table for this class of groups is not necessary.

The columns in the tables mean the following, where we also indicate the \texttt{GAP}-functions from the package \texttt{ModIsom} which can be used to obtain them:
\begin{itemize}
    \item $G$ is the index of the group of order $64$ in the Small Groups Library.
    \item \textit{$\dim \HH^1$} stands for the dimension of the first Hochschild cohomology group which is
          \[\dim \HH^1(FG) = \sum_{g^G} \dim \rH^1(\Cent_G(g), F) = \sum_{g^G} \log_p |\Cent_G(g)/\Frat{\Cent_G(g)}|,\]
          sometimes called the Roggenkamp parameter, cf.~\cite[Proposition 2.8]{MargolisSakuraiSaoPaulo}.
          Here the sums run over all the conjugacy classes of $G$.
          This is the first entry of the output by the function \texttt{ConjugacyClassInfo}.
    \item \textit{K} stands for the number of conjugacy classes of $p^i$-th powers for $p^i = 0, p, \dotsc, \exp(G)$, see \cite[Proposition 2.7]{MargolisSakuraiSaoPaulo}.
          These are the second to penultimate entries of the output of \texttt{ConjugacyClassInfo}.
    \item \textit{Q} stands for the number of conjugacy classes of maximal elementary abelian subgroups of order $p, p^2, \dotsc, p^k$, where $p^k$ is the maximal order of an elementary abelian subgroup of $G$, see \cite[Proposition 2.8]{MargolisSakuraiSaoPaulo}.
          This is the output of \texttt{SubgroupsInfo}.
    \item \textit{G-L} gives the order of the section $\Jen_1(\tilde{G})/\Jen_2(\tilde{G})$, where
          \[ \tilde{G} = \Omega(G : G') = \gen{ g \in G \given g^p \in G' }. \]
          Here $G'$ denotes the derived subgroup of $G$.
          This is an invariant by an application of Lemma 3.2(1) to Example 3.7(2) in \cite{GarciaLucas24}.
          This can be obtained by taking the element \texttt{e} of the output of \texttt{NormalSubgroupsInfo} which contains \texttt{"Omega"} in its first entry and then dividing \texttt{e[2][1]} by \texttt{e[2][2]}.
    \item \textit{$\dim \rH^2$} is the dimension of the second cohomology group $\rH^2(G, F)$, where $F$ means the trivial $FG$-module.
          This can be obtained from the Poincar\'e series of the group given in \cite{CarlsonTownsley03} (cf.~also \cite{GreenKing15}).
          It also follows applying the function \texttt{DimensionTwoCohomology}.
    \item \textit{$\dim \HH^2$} is the dimension of the second Hochschild cohomology group which equals $\sum_{g^G} \dim \rH^2(\Cent_G(g), F)$, where the sum runs over the conjugacy classes of $G$.
          It can be obtained using \texttt{DimensionSecondHochschild}.
\end{itemize}
\index{$HH^1$ @ $\HH^1$ : Cohomology Group}
\index{$C_G(g)$ @ $\Cent_G(g)$ : Centralizer}
\index{$H^1$ @ $\rH^1$ : Cohomology Group}
\index{$G$ @ $\tilde{G}$ : Finite Group}
\index{$\Omega(G : G')$ : Relative Omega Subgroup}
\index{$H^2$ @ $\rH^2$ : Cohomology Group}
\index{$HH^2$ @ $\HH^2$ : Cohomology Group}

The group ID's which are bold, are those groups which survive all tests and are handled by our algorithm, as presented in \cref{sec:ComputationsOrder64}.

\newgeometry{left=1cm,bottom=0.1cm}
\begin{table}
    \begin{minipage}{.38\textwidth}    \caption{Data for the remaining 2-generated groups of order 64} \label{tbl:2Generated}
        \begin{tabular}{cccc}
            \toprule
            $G$         & $\dim \HH^1$ & K              & Q       \\ \midrule
            4           & 52           &                &         \\
            5           & 50           & 22, 7, 2, 1    &         \\
            8           & 44           &                &         \\
            9           & 43           &                &         \\
            10          & 39           &                &         \\
            11          & 38           & 19, 8, 3, 1    &         \\
            12          & 40           & 19, 6, 3, 1    &         \\
            \textbf{13} & 38           & 19, 6, 3, 1    &         \\
            \textbf{14} & 38           & 19, 6, 3, 1    &         \\
            23          & 62           &                &         \\
            24          & 54           &                &         \\
            25          & 50           & 22, 6, 2, 1    &         \\
            30          & 40           & 22, 8, 4, 2, 1 &         \\
            32          & 31           &                &         \\
            33          & 28           & 13, 5, 2, 1    &         \\
            34          & 28           & 13, 5, 1       &         \\
            35          & 27           & 13, 5, 1       &         \\
            36          & 24           &                &         \\
            37          & 23           &                &         \\
            41          & 29           &                & 0, 2    \\
            42          & 29           &                & 0, 0, 1 \\
            43          & 27           & 16, 6, 3, 2, 1 &         \\ \bottomrule
        \end{tabular}
    \end{minipage}
    \begin{minipage}{.45\textwidth}  \caption{Data for the remaining 4-generated groups of order 64} \label{tbl:4Generated}
        \begin{tabular}{ccccc}
            \toprule
            $G$          & $\dim \HH^1$ & K           & Q          & $\dim \HH^2$ \\ \midrule
            199          & 108          & 34, 4, 1    & 0, 0, 2    &              \\
            200          & 108          & 34, 4, 1    & 0, 0, 1    &              \\
            201          & 108          & 34, 4, 1    & 0, 0, 3    &              \\
            206          & 100          &             &            &              \\
            210          & 88           &             &            &              \\
            213          & 92           &             &            &              \\
            214          & 84           & 28, 4, 1    &            &              \\
            215          & 78           &             &            &              \\
            216          & 74           & 22, 4, 1    &            &              \\
            217          & 70           & 22, 4, 1    &            &              \\
            218          & 74           & 22, 3, 1    &            &              \\
            219          & 68           &             & 0, 0, 2, 1 &              \\
            220          & 66           &             &            &              \\
            221          & 68           &             & 0, 0, 5    &              \\
            222          & 60           &             &            &              \\
            223          & 64           &             &            &              \\
            224          & 70           & 22, 3, 1    &            &              \\
            225          & 62           &             &            &              \\
            226          & 90           &             &            &              \\
            227          & 84           & 25, 4, 1    &            &              \\
            228          & 82           &             &            &              \\
            229          & 80           &             & 0, 0, 4    &              \\
            230          & 80           &             & 0, 0, 2    &              \\
            231          & 80           &             & 0, 0, 6    &              \\
            232          & 77           &             &            &              \\
            233          & 74           & 25, 4, 1    & 0, 0, 2    &              \\
            234          & 76           &             & 0, 0, 4    &              \\
            235          & 76           &             & 0, 0, 2    &              \\
            236          & 74           & 25, 4, 1    & 0, 0, 4    & 135          \\
            237          & 72           &             &            &              \\
            \textbf{238} & 70           & 25, 4, 1    &            &              \\
            \textbf{239} & 70           & 25, 4, 1    &            &              \\
            240          & 74           & 25, 4, 1    & 0, 0, 4    & 139          \\
            241          & 61           &             &            &              \\
            242          & 58           &             & 0, 0, 0, 2 &              \\
            243          & 58           &             & 0, 0, 4    &              \\
            244          & 52           &             &            &              \\
            245          & 46           &             &            &              \\
            249          & 108          & 34, 4, 2, 1 &            &              \\
            256          & 70           & 22, 3, 2, 1 &                           \\
            257          & 65           &             & 0, 0, 6    &              \\
            258          & 65           &             & 0, 1, 3    &              \\
            259          & 65           &             & 0, 5       &              \\ \bottomrule
        \end{tabular}
    \end{minipage}
\end{table}
\restoregeometry

\newpage
\newgeometry{left=1cm,bottom=0.1cm,right=.3cm}
\begin{table}
    \begin{minipage}{.4\textwidth}     \caption{Data for the remaining $3$-generated groups of order 64} \label{tbl:3Generated}
        \begin{tabular}{cccccc}
            \toprule
            $G$          & $\dim \HH^1$ & K           & Q       & $\dim \rH^2$ & G-L \\ \midrule
            60           & 108          &             &         &              &     \\
            61           & 96           & 28, 4, 1    &         &              &     \\
            62           & 96           & 28, 6, 1    &         &              &     \\
            63           & 84           & 28, 6, 1    &         & 6            &     \\
            64           & 84           & 28, 8, 1    &         &              &     \\
            \textbf{65}  & 84           & 28, 5, 1    &         &              & 8   \\
            66           & 96           & 28, 5, 1    &         &              &     \\
            67           & 98           &             &         &              &     \\
            68           & 84           & 28, 6, 1    &         & 5            &     \\
            69           & 86           &             &         &              &     \\
            \textbf{70}  & 84           & 28, 5, 1    &         &              & 8   \\
            71           & 88           & 28, 5, 1    &         &              &     \\
            72           & 84           & 28, 5, 1    &         &              & 16  \\
            73           & 72           &             &         &              &     \\
            74           & 68           & 22, 5, 1    &         & 7            &     \\
            75           & 70           &             &         &              &     \\
            76           & 66           & 22, 5, 1    &         & 6            &     \\
            77           & 68           & 22, 4, 1    &         &              &     \\
            78           & 68           & 22, 6, 1    &         &              &     \\
            79           & 66           & 22, 5, 1    &         & 5            &     \\
            80           & 68           & 22, 5, 1    &         & 6            &     \\
            81           & 66           & 22, 6, 1    &         &              &     \\
            82           & 66           & 22, 8, 1    &         &              &     \\
            88           & 88           & 28, 6, 2, 1 &         &              &     \\
            89           & 80           &             &         &              &     \\
            91           & 60           & 22, 4, 1    &         &              &     \\
            94           & 60           & 22, 4, 2, 1 &         &              &     \\
            98           & 64           & 22, 4, 2, 1 & 0, 0, 2 &              &     \\
            99           & 64           & 22, 5, 2, 1 &         &              &     \\
            100          & 60           & 22, 5, 2, 1 & 0, 0, 1 &              &     \\
            102          & 60           & 22, 5, 2, 1 & 0, 0, 2 &              &     \\
            \textbf{104} & 76           & 28, 6, 2, 1 &         &              &     \\
            \textbf{105} & 76           & 28, 6, 2, 1 &         &              &     \\
            109          & 58           &             &         &              &     \\
            111          & 54           &             &         &              &     \\
            113          & 76           & 28, 8, 2, 1 &         &              &     \\
            114          & 66           & 28, 8, 2, 1 &         &              &     \\
            116          & 78           &             &         &              &     \\
            117          & 68           & 28, 8, 2, 1 &         &              &     \\
            121          & 56           & 22, 6, 2, 1 &         &              &     \\
            122          & 52           &             & 0, 1    &              &     \\
            123          & 60           & 22, 6, 2, 1 &         &              &     \\
            125          & 52           &             & 0, 3    &              &     \\
            127          & 64           & 28, 8, 2, 1 &         &              &     \\
            128          & 56           & 19, 5, 2, 1 &         &              &     \\
            129          & 52           &             & 0, 0, 2 &              &     \\
            130          & 52           &             & 0, 0, 3 &              &     \\
            131          & 55           &             &         &              &     \\
            132          & 51           &             & 0, 0, 1 &              &     \\
            133          & 51           &             & 0, 0, 2 &              &     \\ \bottomrule
        \end{tabular}
    \end{minipage}
    \begin{minipage}{.05\textwidth}
        $\phantom{a}$
    \end{minipage}
    \begin{minipage}{.35\textwidth}  
        \vspace*{1.6cm}
        \begin{tabular}{ccccc}
            \toprule
            $G$          & $\dim \HH^1$ & K           & Q       & $\dim \rH^2$ \\ \midrule
            134          & 40           &             & 0, 0, 5 &              \\
            135          & 40           &             & 0, 2, 1 &              \\
            136          & 39           &             & 0, 0, 4 &              \\
            137          & 39           &             & 0, 3    &              \\
            138          & 47           &             &         &              \\
            139          & 45           & 16, 5, 1    &         &              \\
            140          & 50           & 19, 5, 2, 1 & 0, 0, 3 &              \\
            141          & 46           & 19, 5, 2, 1 &         &              \\
            \textbf{142} & 48           & 19, 5, 2, 1 &         & 5            \\
            143          & 44           &             &         & 5            \\
            144          & 49           & 19, 5, 2, 1 & 0, 0, 2 &              \\
            145          & 45           & 19, 5, 2, 1 &         &              \\
            146          & 63           &             &         &              \\
            147          & 64           & 22, 4, 2, 1 & 0, 0, 3 &              \\
            148          & 62           &             &         &              \\
            149          & 41           &             &         &              \\
            150          & 42           &             &         &              \\
            151          & 40           &             & 0, 0, 1 &              \\
            152          & 37           & 16, 4, 2, 1 &         &              \\
            153          & 38           & 16, 4, 2, 1 &         &              \\
            154          & 36           & 16, 4, 2, 1 &         &              \\
            \textbf{155} & 48           & 19, 5, 2, 1 &         & 5            \\
            \textbf{156} & 44           &             &         & 4            \\
            \textbf{157} & 48           & 19, 5, 2, 1 &         & 5            \\
            \textbf{158} & 44           &             &         & 4            \\
            159          & 48           & 19, 5, 2, 1 &         & 4            \\
            \textbf{160} & 44           &             &         & 4            \\
            161          & 50           & 19, 5, 2, 1 & 0, 0, 2 &              \\
            162          & 50           & 19, 6, 2, 1 &         &              \\
            163          & 46           & 19, 6, 2, 1 &         &              \\
            164          & 49           & 19, 6, 2, 1 &         &              \\
            165          & 49           & 19, 5, 2, 1 & 0, 0, 1 &              \\
            166          & 45           & 19, 6, 2, 1 &         &              \\
            \textbf{167} & 50           & 22, 6, 2, 1 &         &              \\
            \textbf{168} & 48           & 22, 6, 2, 1 &         & 4            \\
            169          & 49           & 22, 6, 2, 1 &         &              \\
            170          & 37           & 16, 6, 2, 1 &         & 4            \\
            171          & 38           & 16, 6, 2, 1 &         &              \\
            \textbf{172} & 36           & 16, 6, 2, 1 &         &              \\
            \textbf{173} & 50           & 22, 6, 2, 1 &         &              \\
            174          & 52           &             & 0, 0, 4 &              \\
            \textbf{175} & 48           & 22, 6, 2, 1 &         & 5            \\
            \textbf{176} & 50           & 22, 6, 2, 1 &         &              \\
            177          & 39           &             & 0,0,3   &              \\
            178          & 37           & 16, 6, 2, 1 &         & 5            \\
            \textbf{179} & 48           & 22, 6, 2, 1 &         & 4            \\
            \textbf{180} & 48           & 22, 6, 2, 1 &         & 4            \\
            \textbf{181} & 48           & 22, 6, 2, 1 &         & 5            \\
            \textbf{182} & 36           & 16, 6, 2, 1 &         &              \\
            190          & 34           &             &         &              \\
            191          & 32           &             &         &              \\ \bottomrule
        \end{tabular}
    \end{minipage}
\end{table}
\restoregeometry

\bibliographystyle{abbrv}
\bibliography{references}



\end{document}